\newtheorem{thm}{Theorem}[section]
\newtheorem{prop}[thm]{Proposition}
\newtheorem{cor}[thm]{Corollary}
\theoremstyle{definition}
\newtheorem{defn}[thm]{Definition}
\newtheorem{example}[thm]{Example}
\theoremstyle{remark}
\newtheorem{rmk}[thm]{Remark}
\newcommand{\R}{\mathbb{R}}
\numberwithin{equation}{section}
\newcommand{\pa}{\partial}
\newcommand{\D}{\nabla}
\newcommand{\hess}{\operatorname{Hess}}
\newcommand{\Ric}{\mathrm{Ric}}
\begin{document}

% \title[short text for running head]{full title}
\title[Obata-Type Rigidity]{Obata-Type Rigidity on Static Manifolds with Boundary}

%    Only \author and \address are required; other information is
%    optional.  Remove any unused author tags.

%    author one information
% \author[short version for running head]{name for top of paper}
\author{Hongyi Sheng}
\address{Institute for Theoretical Sciences, Westlake Institute for Advanced Study, Westlake University, Hangzhou, Zhejiang Province 310024, China}
%\curraddr{}
\email{shenghongyi@westlake.edu.cn}
\thanks{}

\author{Kai-Wei Zhao}
\address{Department of Mathematics, University of California, Irvine, Irvine, CA, 92697, USA}
%\curraddr{}
\email{kaiweiz2@uci.edu}
\thanks{}

%    \subjclass is required.
\subjclass[2020]{Primary 53C20, 53C24, 58J32}

\date{}

\dedicatory{}

%    Abstract is required.
\begin{abstract}
We investigate static metrics on simple manifolds with compact boundary and establish an Obata-type rigidity theorem. We identify new sufficient geometric conditions under which the combined curvature map $g\mapsto (R_g, H_g)$ is a local surjection. Consequently, we demonstrate that in contrast to manifolds without boundary, where staticity obstructs deformability, the scalar curvature map can be locally surjective at static metrics on manifolds with boundary. 
\end{abstract}

\maketitle

%    Text of article.

\section{Introduction}
The deformation theory of scalar curvature is a central topic in geometric analysis and general relativity, particularly in understanding the relationship between the scalar curvature map and the presence of \emph{static} metrics. Fischer and Marsden \cite{F-M} showed that on a closed manifold, the scalar curvature map is generically a local surjection. Later, Corvino \cite{C} clarified the relationship with static metrics, proving that the scalar curvature map is locally surjective at a metric $g$ if and only if $g$ is not static. In this paper, we investigate how the presence of a boundary fundamentally alters this landscape.

We begin by establishing the precise notion of staticity in the boundary setting. While various definitions of ``static manifold with boundary" appear in the literature, we adopt the following formulation similar to that of \cite{Sheng2}, which couples the interior static equation with a specific boundary condition related to the linearization of the boundary mean curvature.

\begin{defn}
Let $(M,g)$ be a complete, connected, smooth manifold with boundary. A non-trivial function $V\in C^\infty(M)$ is called a \emph{static potential} if it satisfies the equation
\begin{equation}\label{static int}
    -\left(\Delta V\right) g+\operatorname{Hess}V - V \operatorname{Ric}  = 0\quad\text{ in } M.
\end{equation}
A metric $g$ admitting such a $V$ is said to be \emph{static}. Furthermore, we say that a static potential $V$ is \emph{admissible} if it satisfies the boundary condition 
\begin{equation}\label{static bdry}
    V_\nu \hat{g} = V h \quad \text{ on } \partial M,
\end{equation}
where $V_\nu$ denotes the normal derivative with respect to the outward unit normal $\nu$, $\hat{g}$ is the induced metric on $\partial M$, and $h$ is the second fundamental form defined by $h(X,Y) = -\langle \nu, D_{X}Y \rangle$ for all vector fields $X,Y$ tangent to $\partial M$. The pair $(M,g)$ is called a \emph{static manifold with boundary} if it admits an admissible static potential.
\end{defn}

Static manifolds with boundary exhibit strong intrinsic geometric constraints. As shown in \cites{Sheng, Sheng2}, the existence of an admissible static potential imposes significant rigidity on the curvatures:

\begin{thm}[\cite{Sheng}*{Theorem 3.3}, \cite{Sheng2}*{Remark 2.2}]\label{basic}
If $(M,g)$ is a static manifold with boundary, then the scalar curvature of $M$ is constant, the boundary $\partial M$ is umbilic, and the mean curvature is locally constant on $\partial M$. Moreover, we have $\operatorname{Ric}_{i\nu} = 0 \,\,\, (i = 1, \ldots, n-1)$ on $\partial M$.
\end{thm}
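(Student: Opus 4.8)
\emph{Strategy.} The four assertions split into one interior statement and three boundary statements, all of which will follow from a short list of elementary identities, provided the zero set of $V$ has empty interior, both in $M$ and in $\partial M$.

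\emph{Constant scalar curvature.} Tracing \eqref{static int} gives $\Delta V=-\tfrac{R}{n-1}V$, and substituting this back into \eqref{static int} yields $\operatorname{Hess}V=V\bigl(\operatorname{Ric}-\tfrac{R}{n-1}g\bigr)$ on $M$. Taking the divergence of this identity and using the standard formula $\operatorname{div}(\operatorname{Hess}V)=d(\Delta V)+\operatorname{Ric}(\nabla V,\cdot)$ together with the contracted second Bianchi identity $\operatorname{div}(\operatorname{Ric})=\tfrac{1}{2}\,dR$, the Ricci terms cancel, and inserting $\Delta V=-\tfrac{R}{n-1}V$ once more reduces everything to $\tfrac{1}{2}V\,dR=0$ on $M$. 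Since $\operatorname{Hess}V$ is pointwise a fixed multiple of $V$, the pair $(V,\nabla V)$ satisfies a linear first-order ODE along each geodesic, so $V$ cannot vanish on a nonempty open set without vanishing identically (using that $M$ is connected; equivalently this is unique continuation for $\Delta V+\tfrac{R}{n-1}V=0$). Hence $\{V\neq 0\}$ is dense in $M$ and $dR\equiv 0$.

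\emph{Umbilicity, then $\operatorname{Ric}_{i\nu}=0$ and local constancy of $H$.} Tracing \eqref{static bdry} over the $(n-1)$-dimensional boundary gives $(n-1)V_\nu=VH$, and feeding this back into \eqref{static bdry} gives $V\bigl(h-\tfrac{H}{n-1}\hat g\bigr)=0$ on $\partial M$, so $\partial M$ is totally umbilic on $\{V\neq 0\}\cap\partial M$; this set is dense in $\partial M$, for if $V$ vanished on a relatively open $U\subset\partial M$ then \eqref{static bdry} would force $V_\nu=0$ on $U$ as well, hence $V$ and $\nabla V$ vanish on $U$, and the ODE for $(V,\nabla V)$ along the inward normal geodesics from $U$ would produce a nonempty open set in $M$ on which $V\equiv 0$, contradicting the previous paragraph. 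Thus $h=\tfrac{H}{n-1}\hat g$ on all of $\partial M$ by continuity. Now fix a local orthonormal frame $e_1,\dots,e_{n-1}$ tangent to $\partial M$. Evaluating \eqref{static int} on the pair $(\nu,e_i)$ and using $g(\nu,e_i)=0$ gives $(\operatorname{Hess}V)(\nu,e_i)=V\operatorname{Ric}(\nu,e_i)$; writing out the left-hand side, $(\operatorname{Hess}V)(\nu,e_i)=e_i(V_\nu)-\sum_j h(e_i,e_j)\,e_j(V)$, and inserting $V_\nu=\tfrac{1}{n-1}VH$ and $h=\tfrac{H}{n-1}\hat g$ gives $(\operatorname{Hess}V)(\nu,e_i)=\tfrac{1}{n-1}V\,\partial_i H$, so $\operatorname{Ric}(\nu,e_i)=\tfrac{1}{n-1}\partial_i H$ on $\{V\neq 0\}\cap\partial M$. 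On the other hand, the contracted Codazzi equation on the umbilic hypersurface $\partial M$ reads $\operatorname{Ric}(\nu,e_i)=(\operatorname{div}_{\hat g}h)(e_i)-\partial_i H=-\tfrac{n-2}{n-1}\partial_i H$ (with the curvature convention under which $\operatorname{Ric}$ is positive on round spheres). Comparing the two expressions gives $(n-1)\,\partial_i H=0$, so $dH\equiv 0$ on $\{V\neq 0\}\cap\partial M$ and hence on all of $\partial M$ by continuity; thus $H$ is constant on each connected component of $\partial M$, and $\operatorname{Ric}(\nu,e_i)=\tfrac{1}{n-1}\partial_i H$ vanishes identically on $\partial M$.

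\emph{Main difficulty.} The computations are routine; the delicate point is the density of $\{V\neq 0\}$ in $M$ and in $\partial M$, which is where the hypothesis that $V$ is nontrivial actually gets used. In the last step one must also be careful with sign conventions: it is crucial that the static identity and the contracted Codazzi identity for $\operatorname{Ric}(\nu,e_i)$ come out with opposite signs, which is precisely what makes the argument work in all dimensions.
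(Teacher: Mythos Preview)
The paper does not prove this theorem; it is quoted as a known result from \cite{Sheng}*{Theorem~3.3} and \cite{Sheng2}*{Remark~2.2} and then used freely in Section~\ref{sec2}. So there is no ``paper's own proof'' to compare against.

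That said, your argument is correct and is essentially the standard one. The interior step (constancy of $R$) is the usual divergence-of-the-static-equation computation, and your handling of the density of $\{V\neq 0\}$ via the second-order ODE along geodesics is exactly what is needed to pass from $V\,dR=0$ to $dR=0$. On the boundary, your two computations of $\operatorname{Ric}(\nu,e_i)$---one from the $(\nu,e_i)$ component of the static equation, one from contracted Codazzi on an umbilic hypersurface---are both right under the paper's sign convention $h(X,Y)=-\langle\nu,D_XY\rangle$, and they do indeed come out with opposite signs, forcing $\partial_i H=0$. One cosmetic point: in your final sentence you invoke the static-side identity $\operatorname{Ric}(\nu,e_i)=\tfrac{1}{n-1}\partial_i H$ to conclude $\operatorname{Ric}_{i\nu}=0$, but that identity was only established on $\{V\neq 0\}\cap\partial M$; it is cleaner to conclude from the Codazzi-side identity $\operatorname{Ric}(\nu,e_i)=-\tfrac{n-2}{n-1}\partial_i H$, which you have shown holds on all of $\partial M$. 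This does not affect the validity of the proof.
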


To analyze the deformation theory of these manifolds, we follow \cite{Sheng} and define the operators $L^*_g: C^{\infty}(\overline{M}) \longrightarrow \mathcal{C}^{\infty}(\overline{M})$ by
$$
L^*_g u = -\left(\Delta u\right) g+\operatorname{Hess}u - u \operatorname{Ric},
$$
and $\Phi^*_g: C^{\infty}(\overline{M}) \longrightarrow \mathcal{C}^{\infty}(\overline{M}) \times \mathcal{C}^{\infty}(\partial M)$ by
$$
\Phi^*_g u = (L^*_g u, u_\nu \hat{g}-u h).
$$
Recall that the map $g\mapsto (R_g, H_g)$ is a local surjection if and only if $\operatorname{ker} \Phi^*_g = \{0\}$ \cite{Sheng}. 
A primary focus of this work is the regime where $\operatorname{ker} \Phi^*_g = \{0\}$ despite the fact that $\operatorname{ker} L^*_g \neq \{0\}$. In such a setting, even though $g$ is static (admitting a static potential in the kernel of $L^*_g$), the combined curvature map $g\mapsto (R_g, H_g)$ remains a local surjection. This highlights a key distinction from the boundaryless case: for manifolds with boundary, staticity does not necessarily obstruct deformability.

To state our main rigidity and deformation results, we restrict our attention to the connected case.

\begin{defn}
    A manifold with boundary is called \emph{simple} if $M$ and its boundary $\partial M$ are both connected.
\end{defn}

Our first main result is a classification theorem. Let $(M,g)$ be a simple static manifold with compact boundary admitting a positive static potential $V$. Under specific curvature conditions, we show that $V$ satisfies an Obata-type equation with Robin boundary conditions. While Obata's equation has been studied extensively (see, e.g., \cites{K, T}), the inclusion of the boundary term leads to the following new rigidity classification.

\begin{thm}[Obata-Type Rigidity] 
Let $(M,g)$ be a simple static manifold with compact boundary admitting a positive static potential $V$. If $(M,g)$ is non-compact, we additionally assume the decay condition \eqref{decay}. Suppose that the scalar curvature $R$ and mean curvature $H$ satisfy 
\begin{equation*}
 -\frac{n}{n-1}H^2\le R\le -H^2.
\end{equation*} 
Then $V$ satisfies an Obata-type equation subject to Robin boundary conditions. Moreover, $(M,g)$ is isometric to one of the following: 
\begin{itemize} 
\item[I.] A Ricci-flat manifold with totally geodesic boundary; 
\item[II.] The non-compact warped product $M \cong (-\infty,0] \times \Sigma$ equipped with the metric $g = dt^2 + e^{\frac{2Ht}{n-1}} g_{\Sigma}$, where $(\Sigma, g_{\Sigma})$ is a compact, Ricci-flat manifold of dimension $n-1$. \end{itemize}
\end{thm}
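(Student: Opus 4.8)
The plan is to derive the differential system a positive admissible static potential must obey, prove a single weighted integral identity that forces the Obata equation and fixes the scalar curvature, and then read off the two model geometries. First I would trace \eqref{static int} and use that $R$ is constant (Theorem \ref{basic}): this gives $\Delta V=-\tfrac{R}{n-1}V$, hence $\hess V=V\Ric-\tfrac{R}{n-1}Vg$. Since $\partial M$ is connected and umbilic with constant mean curvature $H$, we have $h=\tfrac{H}{n-1}\hat g$, so \eqref{static bdry} becomes the Robin condition $V_\nu=\tfrac{H}{n-1}V$ on $\partial M$; the target is to upgrade $\Delta V+\tfrac{R}{n-1}V=0$ to the Obata equation $\hess V+\tfrac{R}{n(n-1)}Vg=0$, i.e. $g$ Einstein, while pinning $R$ to an endpoint of the given range.

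The key step is a weighted Bochner identity. Applying the Bochner formula to $w:=|\nabla V|^2$ and using $\Delta V=-\tfrac R{n-1}V$ together with $V\Ric=\hess V-(\Delta V)g$, the lower-order terms collapse to
\begin{equation*}
\Div\!\big(V^{-1}\nabla w\big)=\frac2V\,\big|\hess V-\tfrac1n(\Delta V)g\big|^2+\frac{2R^2}{n(n-1)^2}\,V .
\end{equation*}
Integrating over $M$ — in the non-compact case this is exactly where the decay hypothesis \eqref{decay} is used, to license the divergence theorem and the integrations by parts below — gives $\int_{\partial M}V^{-1}w_\nu=\int_M\big(\tfrac2V|\hess V-\tfrac1n(\Delta V)g|^2+\tfrac{2R^2}{n(n-1)^2}V\big)$. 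On $\partial M$ the tangential part of $\nabla w$ cancels by the Robin condition and umbilicity, so $w_\nu=2V_\nu\hess V(\nu,\nu)$; the static equation gives $\hess V(\nu,\nu)=V\Ric(\nu,\nu)-\tfrac R{n-1}V$; and restricting \eqref{static int} to $T\partial M$ and tracing (via $\hess_gV|_{T\partial M}=\hess_{\hat g}(V|_{\partial M})+V_\nu h$) yields $\Ric(\nu,\nu)=-\Delta_{\hat g}(V|_{\partial M})/(V|_{\partial M})-\tfrac{H^2}{n-1}$. Substituting, integrating by parts on the closed manifold $\partial M$ to kill the $\Delta_{\hat g}$ term, and using $\int_{\partial M}V=-\tfrac RH\int_M V$ (integrate $\Delta V=-\tfrac R{n-1}V$; the case $H=0$ forces $R=0$ and drops into the next step) leads to
\begin{equation*}
\int_M\frac{\big|\hess V-\tfrac1n(\Delta V)g\big|^2}{V}\,dV=\frac{R}{(n-1)^2}\Big(H^2+\frac{(n-1)R}{n}\Big)\int_M V\,dV .
\end{equation*}
Here the pinching is invoked on both sides: $R\le-H^2\le0$ gives $R\le0$, while $R\ge-\tfrac n{n-1}H^2$ gives $H^2+\tfrac{(n-1)R}{n}\ge0$, so the right side is $\le0$ and the left side is $\ge0$. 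Hence both vanish: $g$ is Einstein, $V$ solves the Obata equation with Robin data, and $R\big(H^2+\tfrac{(n-1)R}{n}\big)=0$, forcing $R=0$ or $R=-\tfrac n{n-1}H^2$.

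For the classification: if $R=0$ then $\Ric=0$ and, since $R\le-H^2$, $H=0$, so $\partial M$ is totally geodesic — case I. If $R=-\tfrac n{n-1}H^2$ then $H>0$ (from $\int_{\partial M}V=-\tfrac RH\int_M V>0$); set $c:=-\tfrac R{n(n-1)}=\tfrac{H^2}{(n-1)^2}>0$, so $\hess V=cVg$ and $\Ric=-(n-1)c\,g$. Differentiating shows $|\nabla V|^2-cV^2$ is constant; the tangential restriction of the Einstein–static data forces $\hess_{\hat g}(V|_{\partial M})=\big(c-\tfrac{H^2}{(n-1)^2}\big)(V|_{\partial M})\hat g=0$, so $V|_{\partial M}$ is harmonic on the closed manifold $\partial M$, hence a constant $z_0$; evaluating the conserved quantity on $\partial M$ (where $|\nabla V|^2=V_\nu^2=cz_0^2$) then gives $|\nabla V|^2\equiv cV^2$. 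Thus $u:=\tfrac1{\sqrt c}\log(V/z_0)$ satisfies $|\nabla u|\equiv1$, $u|_{\partial M}=0$, $\nabla u$ outward-pointing along $\partial M$, and $\hess u=\sqrt c\,(g-du\otimes du)$, so its level sets are totally umbilic with constant principal curvature $\sqrt c=\tfrac H{n-1}$. Completeness of $(M,g)$ and compactness of $\partial M$ let the gradient flow of the critical-point-free function $u$ identify $M\cong(-\infty,0]\times\Sigma$ with $\Sigma=\partial M$ and $g=dt^2+e^{2Ht/(n-1)}g_\Sigma$, and the warped-product Ricci formula combined with $\Ric=-(n-1)c\,g$ forces $(\Sigma,g_\Sigma)$ to be Ricci-flat — case II.

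I expect the main obstacle to be the bookkeeping in the second step: producing the weighted Bochner identity $\Div(V^{-1}\nabla w)=\dots$ and, above all, reducing the boundary integral $\int_{\partial M}V^{-1}w_\nu$ to a bulk integral with a clean constant by simultaneously exploiting the Robin condition, the $(\nu,\nu)$-component and the tangential trace of \eqref{static int}, and the divergence theorem on $\partial M$. It is precisely this reduction that makes the two-sided bound $-\tfrac n{n-1}H^2\le R\le-H^2$ the sharp hypothesis trapping $\int_M V^{-1}|\hess V-\tfrac1n(\Delta V)g|^2$ between quantities of opposite sign; once it is in hand, the remainder is classical Obata / harmonic-function reasoning on $\partial M$ plus a routine warped-product computation.
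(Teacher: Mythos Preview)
Your approach is correct and genuinely different from the paper's. The paper does not use a weighted Bochner formula for $w=|\nabla V|^2$; instead it introduces the tensor $S=\Ric+\tfrac{H^2}{n-1}g$, observes that $VS=\hess V+\tfrac{R+H^2}{n-1}Vg$ and that $S$ is divergence-free (since $R$ is constant), and integrates the pointwise identity
\[
V^{-1}\bigl|\hess V+\tfrac{R+H^2}{n-1}Vg\bigr|^2=\Div\bigl(S(DV,\cdot)\bigr)+\tfrac{R+H^2}{n-1}\bigl(R+\tfrac{nH^2}{n-1}\bigr)V.
\]
The resulting integral formula has the product $(R+H^2)(R+\tfrac{n}{n-1}H^2)$ on the right, so the curvature pinch is visibly the exact hypothesis that forces both sides to vanish; your identity carries the product $R(R+\tfrac{n}{n-1}H^2)$ instead, which under the pinch gives the same case split after one extra observation. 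The paper's route is tidier on the boundary: $\int_{\partial M}S(DV,\nu)$ vanishes outright (from $\Ric_{i\nu}=0$ and the same $\Delta_{\hat g}V$ computation you use), so no intermediate relation $\int_{\partial M}V=-\tfrac{R}{H}\int_M V$ is needed. Your argument that $\hess V(X,\nu)=0$ for tangential $X$ via the Robin condition and umbilicity, and your derivation of $V|_{\partial M}\equiv\text{const}$ in case~II directly from $\hess_{\hat g}(V|_{\partial M})=0$, are clean alternatives to what the paper does there.

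One caution in the non-compact case: the decay hypothesis \eqref{decay} is stated for the flux of $S(DV,\cdot)$, not for $V^{-1}\nabla w$. Since $\tfrac12 V^{-1}w_\nu=S(DV,\nu)-\tfrac{R+H^2}{n-1}V_\nu$, your boundary-at-infinity term differs from the one controlled by \eqref{decay} by a multiple of $\int_{\partial B_r}V_\nu$; the same extra flux appears when you convert $\int_{\partial M}V$ to $\int_M V$. So your plan does not literally use \eqref{decay} ``exactly'' --- either rewrite your divergence so that the flux at infinity is precisely $S(DV,\nu)$, or note that you need the companion condition on $\int_{\partial B_r}V_\nu$ as well. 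This is a bookkeeping issue rather than a conceptual gap.
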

We say that $(M,g)$ is of \emph{Obata type} if it is isometric to one of the cases described above.

\bigskip
Building on this classification, we establish sufficient geometric conditions for the deformability of scalar curvature, which require rough information about static potentials near infinity. This constitutes our second main result.

\begin{thm}\label{surj}
Let $(M,g)$ be a complete, connected, smooth manifold with boundary, and suppose that $g$ is static with a positive static potential. The map $g\mapsto (R_g, H_g)$ is a local surjection if any of the following conditions holds:
\begin{itemize}
    \item[1.] The mean curvature $H$ is not locally constant;
    \item[2.] The boundary $\partial M$ is not umbilical;
    \item[3.] $(M,g)$ is a simple manifold with compact boundary that is not of Obata type, satisfies the decay condition \eqref{decay} if non-compact, and has scalar curvature satisfying
    \begin{equation*}%\label{curv ineq}
        -\frac{n}{n-1}H^2\le R\le -H^2.
    \end{equation*}
\end{itemize}
\end{thm}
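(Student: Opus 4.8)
The plan is to reduce the statement to the algebraic criterion recalled in the excerpt (following \cite{Sheng}): the map $g \mapsto (R_g, H_g)$ is a local surjection if and only if $\ker \Phi^*_g = \{0\}$. Thus it suffices to show that, under any one of the three conditions, the only $u \in C^\infty(\overline{M})$ solving $\Phi^*_g u = 0$ is $u \equiv 0$.

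I would argue by contradiction. Suppose $u \neq 0$ and $\Phi^*_g u = 0$. By the definition of $\Phi^*_g$ this means $L^*_g u = 0$ in $M$, i.e. $u$ solves the interior static equation \eqref{static int}, together with $u_\nu \hat g = u h$ on $\partial M$, i.e. $u$ satisfies the admissibility condition \eqref{static bdry}. Since $u$ is non-trivial, $u$ is an admissible static potential, so $(M,g)$ is, by definition, a static manifold with boundary. Theorem \ref{basic} then forces $\partial M$ to be umbilic with locally constant mean curvature $H$ (and, in addition, $R$ constant and $\operatorname{Ric}_{i\nu}=0$ on $\partial M$). This immediately settles conditions 1 and 2: if $H$ is not locally constant, or if $\partial M$ is not umbilical, no such nonzero $u$ can exist, whence $\ker \Phi^*_g = \{0\}$. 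Note that for these two cases one does not even need to invoke the hypothesis that $g$ carries a positive static potential.

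For condition 3 the argument continues from the same contradiction hypothesis. We now have at our disposal both the admissible static potential $u$ — so that $(M,g)$ is a static manifold with compact boundary — and the positive static potential $V$ supplied by the theorem's standing assumption; moreover $M$ is simple, the decay condition \eqref{decay} holds in the non-compact case, and $-\tfrac{n}{n-1}H^2 \le R \le -H^2$. These are exactly the hypotheses of the Obata-Type Rigidity theorem, which I would invoke as a black box to conclude that $(M,g)$ is of Obata type. This contradicts the assumption in condition 3 that $(M,g)$ is \emph{not} of Obata type, and hence $\ker \Phi^*_g = \{0\}$ in this case as well.

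The routine content here is minimal, as the heavy lifting is carried out by Theorem \ref{basic} and the Obata-Type Rigidity theorem; the only delicate points are bookkeeping. In particular one should check that the admissible static potential produced in the contradiction step may legitimately differ from — and change sign relative to — the positive static potential $V$, so that the two distinct hypotheses of the Obata-Type Rigidity theorem ("static manifold with boundary" and "admits a positive static potential") are genuinely available simultaneously; I expect no difficulty, since neither statement requires the two potentials to coincide. The remaining care is simply in threading the compactness and decay hypotheses of condition 3 through consistently so that the earlier results apply verbatim.
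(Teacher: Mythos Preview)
Your proposal is correct and follows essentially the same approach as the paper: argue by contradiction that $\ker\Phi^*_g\neq\{0\}$, so $(M,g)$ is a static manifold with boundary, then invoke Theorem~\ref{basic} to dispose of Conditions~1 and~2 and the Obata-Type Rigidity theorem (equivalently Corollary~\ref{cor: obata type} together with the subsequent classification) for Condition~3. Your treatment is in fact slightly more detailed than the paper's, which simply writes ``We focus on the case where Condition~3 holds'' and leaves the first two conditions implicit.
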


\begin{rmk}
    Corvino \cite{C} proved that for manifolds without boundary, the scalar curvature map is a local surjection at $g$ if and only if $g$ is not static. As a corollary of Theorem~\ref{surj}, we demonstrate that for manifolds with boundary, the scalar curvature map can be a local surjection even if $g$ is static, provided that the geometric conditions in Theorem~\ref{surj} are met.
\end{rmk}

The remainder of this paper is structured as follows. In Section~\ref{sec2}, we establish a key inequality for simple static manifolds with compact boundary which leads to the Obata-type rigidity theorems. As an application, we prove Theorem~\ref{surj}. In Section~\ref{sec3}, we present examples of static manifolds in general relativity that are not of Obata type and admit a locally surjective curvature map $g\mapsto (R_g, H_g)$.

\section*{Acknowledgments}
We are grateful to Lan-Hsuan Huang for helpful discussions. This work was initiated during the authors' visit to the Simons Laufer Mathematical Sciences Institute (formerly MSRI) in Berkeley, California, during the Fall 2024 semester. We thank the institute for its hospitality and support.

\section{Obata-type rigidity theorem}\label{sec2}
We begin this section with some preliminary observations.

By taking the trace of equations \eqref{static int} and  \eqref{static bdry}, we obtain
\begin{align}\label{static trace}
\left\{\begin{aligned}
\Delta V+\frac{R}{n-1} V & =0 \quad & \text {in }  &M,\\
V_\nu-\frac{H}{n-1} V & =0 & \text {on }  &\partial M.
\end{aligned}\right.
\end{align}
Consequently, the system \eqref{static int}–\eqref{static bdry} is equivalent to
\begin{align}\label{static eqn}
\left\{\begin{aligned} 
\operatorname{Hess}V & = \left(\operatorname{Ric} - \frac{R}{n-1}g\right) V \qquad & \text{in } &M,\\ 
V_\nu & =\frac{H}{n-1} V & \text{on } &\partial M.
\end{aligned}\right.
\end{align}

Recall that on a simple static manifold with compact boundary, the mean curvature $H$ of $\partial M$ is constant. Hence, we may extend $H$ as a constant function on the whole of $M$.

In what follows, we derive a useful inequality valid for such manifolds.

\begin{prop}\label{integral eqn}
Let $(M,g)$ be a compact simple static manifold with boundary, admitting a static potential $V>0$. Then 
\begin{equation*}
    \int_M V^{-1}\left|\operatorname{Hess}V +\frac{R+H^2}{n-1}Vg\right|^2 d\mu_g = \frac{R+H^2}{n-1}\left(R+\frac{n}{n-1}H^2\right)\int_M V d\mu_g.
\end{equation*}
\end{prop}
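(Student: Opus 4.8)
The plan is to integrate the Bochner-type identity that governs the quantity $W := \operatorname{Hess}V + \frac{R+H^2}{n-1}Vg$, exploiting the static equation \eqref{static eqn} to rewrite everything in terms of $V$, $R$, $H$, and curvature. First, starting from the interior static equation $\operatorname{Hess}V = (\operatorname{Ric}-\frac{R}{n-1}g)V$, I would substitute to express $W = \big(\operatorname{Ric} + \frac{H^2}{n-1}g\big)V$, so that $V^{-1}|W|^2 = V\,\big|\operatorname{Ric}+\frac{H^2}{n-1}g\big|^2$. The aim is to show that $V^{-1}|W|^2$ differs from the right-hand side density $\frac{R+H^2}{n-1}\big(R+\frac{n}{n-1}H^2\big)V$ by a pure divergence, whose boundary integral vanishes on account of the Robin boundary condition $V_\nu = \frac{H}{n-1}V$ together with the umbilicity and constancy of $H$ from Theorem~\ref{basic}.

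The key computational step is to differentiate the static equation and apply the contracted second Bianchi identity. Taking a divergence of $\operatorname{Hess}V = (\operatorname{Ric}-\frac{R}{n-1}g)V$ and using $\operatorname{div}\operatorname{Ric} = \frac12 dR$ (with $R$ constant, so $dR = 0$) yields an identity relating $\Delta(dV)$, $\operatorname{Ric}(\nabla V,\cdot)$, and $\nabla|\nabla V|^2$; combined with the trace equation $\Delta V = -\frac{R}{n-1}V$ this produces a first-order relation. The cleanest route, however, is probably to compute $\operatorname{div}\big(V^{-1} W(\nabla V,\cdot)\big)$ or more directly $\operatorname{div}\big(\langle W, \operatorname{Hess}V\rangle\text{-type terms}\big)$: expand $\operatorname{div}(W(\nabla V,\cdot))$, use the second Bianchi identity to handle $(\operatorname{div}W)(\nabla V)$, and recognize $\langle W,\operatorname{Hess}V\rangle = \langle W, W\rangle - \frac{R+H^2}{n-1}V\langle W,g\rangle = |W|^2 - \frac{R+H^2}{n-1}V\,\operatorname{tr}W$, where $\operatorname{tr}W = \Delta V + n\frac{R+H^2}{n-1}V = -\frac{R}{n-1}V + \frac{n(R+H^2)}{n-1}V = \frac{(n-1)R + nH^2 + \ldots}{\ldots}V$ collapses to a multiple of $V$ times $(R + \tfrac{n}{n-1}H^2)$ up to the factor $\frac{R+H^2}{n-1}$. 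Integrating over $M$ and applying the divergence theorem then gives the stated identity once the boundary term is shown to vanish.

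The boundary term is where the hypotheses on $\partial M$ enter decisively, and I expect this to be the main obstacle to get exactly right. After integration by parts one is left with a boundary integral involving $W(\nabla V,\nu)$ and $\langle W,\nu\otimes\nu\rangle V_\nu$ type expressions on $\partial M$. Here I would decompose $\nabla V$ into its tangential and normal parts, use $V_\nu = \frac{H}{n-1}V$, and invoke Theorem~\ref{basic}: the boundary is umbilic ($h = \frac{H}{n-1}\hat g$), $H$ is constant, and $\operatorname{Ric}_{i\nu}=0$. The umbilicity lets one relate the tangential Hessian of $V$ on $\partial M$ to $V_\nu h = \frac{H}{n-1}V_\nu \hat g$, and the condition $\operatorname{Ric}_{i\nu}=0$ kills the cross terms $W_{i\nu} = \operatorname{Ric}_{i\nu}V = 0$; the purely normal component $W_{\nu\nu} = (\operatorname{Ric}_{\nu\nu} + \frac{H^2}{n-1})V$ must then be shown to pair against $V_\nu$ in a way that cancels, using the Gauss equation to express $\operatorname{Ric}_{\nu\nu}$ in terms of $R$, the scalar curvature of $\partial M$, and $H$. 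The careful bookkeeping of these boundary contributions — ensuring that umbilicity plus constancy of $H$ plus $\operatorname{Ric}_{i\nu}=0$ makes the boundary integral vanish identically — is the crux; the interior Bochner computation, while lengthy, is routine.
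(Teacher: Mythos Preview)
Your interior computation is essentially the paper's: once you write $W = V\big(\operatorname{Ric}+\tfrac{H^2}{n-1}g\big) =: VS$, the tensor $S$ is divergence-free (contracted Bianchi with $R$ constant), so $\operatorname{div}\big(S(DV,\cdot)\big)=\langle\operatorname{Hess}V,S\rangle$, and one line of algebra gives
\[
V^{-1}|W|^2=\operatorname{div}\big(S(DV,\cdot)\big)+\tfrac{R+H^2}{n-1}\big(R+\tfrac{n}{n-1}H^2\big)V.
\]
This is exactly what you sketch; your option $\operatorname{div}(V^{-1}W(\nabla V,\cdot))$ \emph{is} $\operatorname{div}(S(DV,\cdot))$, and working with $S$ rather than $W$ saves you from the stray $(\operatorname{div}W)(\nabla V)=S(\nabla V,\nabla V)$ term.

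The gap is in the boundary step. Using $\operatorname{Ric}_{i\nu}=0$ and $V_\nu=\tfrac{H}{n-1}V$ you correctly reduce the boundary integrand to $\tfrac{H}{n-1}\big(R_{\nu\nu}+\tfrac{H^2}{n-1}\big)V$. But your plan to ``use the Gauss equation to express $\operatorname{Ric}_{\nu\nu}$'' does not close: the twice-contracted Gauss equation introduces the intrinsic scalar curvature $R_{\partial M}$, which is neither constant nor otherwise controlled, so no pointwise cancellation occurs and you are left needing $\int_{\partial M}R_{\partial M}\,V$, which you do not know. In fact the integrand does \emph{not} vanish pointwise. The paper's device is different: take the tangential trace of the static equation on $\partial M$ (equivalently, split $\Delta V=\Delta_{\partial M}V+HV_\nu+V_{\nu\nu}$ and use $V_{\nu\nu}=(R_{\nu\nu}-\tfrac{R}{n-1})V$) to obtain
\[
\Delta_{\partial M}V+\Big(R_{\nu\nu}+\tfrac{H^2}{n-1}\Big)V=0\quad\text{on }\partial M.
\]
Since $\partial M$ is closed, $\int_{\partial M}\Delta_{\partial M}V=0$, and the boundary term vanishes after integration. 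Replace your Gauss-equation step with this identity and the proof goes through.
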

\begin{proof}
Define a symmetric $(0,2)$-tensor $S$ on $M$ by
$$
S \triangleq\operatorname{Ric} + \frac{H^2}{n-1}g.
$$  
From \eqref{static eqn}, we have
$$
VS = \operatorname{Hess}V +\frac{R+H^2}{n-1}Vg.
$$
Using the second Bianchi identity and the property that the scalar curvature $R$ is constant (Theorem~\ref{basic}), we find that $S$ is divergence-free:
$$
\operatorname{div}S = \operatorname{div}\operatorname{Ric} = \frac12 DR = 0.
$$
So we have
$$
\operatorname{div}\left(S(DV, \cdot)\right) = \operatorname{div}S\cdot DV + \left<\operatorname{Hess}V, S\right> = \left<\operatorname{Hess}V, S\right>.
$$
Thus,
\begin{align}
V^{-1}\left|\operatorname{Hess}V +\frac{R+H^2}{n-1}Vg\right|^2 & = \left<VS, S\right>\notag\\
    & = \left<\operatorname{Hess}V, S\right> + \frac{R+H^2}{n-1}V\left<g, S\right>\notag\\
    & = \operatorname{div}\left(S(DV, \cdot)\right) + \frac{R+H^2}{n-1}\left(R+\frac{nH^2}{n-1}\right)V\label{eq: local eqn}.
\end{align}

Integrating this expression over $M$ and applying the Divergence Theorem yields:
$$
\int_M V^{-1}\left|\operatorname{Hess}V +\frac{R+H^2}{n-1}Vg\right|^2 d\mu_g = \int_{\partial M} S(DV, \nu)d\sigma_g + \frac{R+H^2}{n-1}\left(R+\frac{nH^2}{n-1}\right)\int_M Vd\mu_g.
$$

To complete the proof, we must show that the boundary integral vanishes:
$$
\int_{\partial M} S(DV, \nu)d\sigma_g = 0.
$$
We choose a local orthonormal frame $\{e_1, \dots, e_{n-1}, \nu\}$ on $\partial M$, where $\nu$ is the outward unit normal vector. On $\partial M$, it is known that $\operatorname{Ric}_{i\nu} = 0$ for $i = 1, \dots, n-1$. Thus, $S(DV, \nu)$ becomes:
$$
\begin{aligned}
    S\left(DV, \nu\right) & = \left(\operatorname{Ric} + \frac{H^2}{n-1}g\right)\left(\sum_{i=1}^{n-1} V_ie_i+V_\nu \nu, \nu\right)\\
    & = \left(R_{\nu\nu} + \frac{H^2}{n-1}\right)V_\nu\\
    & = \left(R_{\nu\nu} + \frac{H^2}{n-1}\right)\frac{H}{n-1}V.
\end{aligned}
$$
On the other hand, from \eqref{static trace}, \eqref{static eqn}, and the Gauss equation, we have the following on $\partial M$:
$$
\begin{aligned}
0 &=\Delta V+\frac{R}{n-1} V\\
    &=\Delta_{\partial M} V+HV_\nu+V_{\nu\nu}+\frac{R}{n-1} V\\
	&=\Delta_{\partial M} V+\left(\frac{H^2}{n-1} +R_{\nu\nu}\right) V.\\
\end{aligned}
$$
Integrating this final equation over the boundary $\partial M$, we get 
$$
0 = \int_{\partial M} \left(\frac{H^2}{n-1} +R_{\nu\nu}\right) Vd\sigma_g.
$$
As a result, 
$$
\int_{\partial M} S(DV, \nu)d\sigma_g = \frac{H}{n-1}\int_{\partial M} \left(R_{\nu\nu} + \frac{H^2}{n-1}\right)Vd\sigma_g = 0.
$$
\end{proof}

Subject to appropriate decay conditions, the previous result extends to the non-compact setting.
\begin{prop}[Extension to Non-Compact Manifolds]
Let $(M,g)$ be a non-compact, simple static manifold with compact boundary, admitting a static potential $V>0$. Suppose that the following decay condition holds along a compact exhaustion $\{B_r\}$ of $M$:
\begin{equation}\label{decay}
    \liminf_{r\rightarrow\infty} \int_{\partial B_r} S(DV, \nu)d\sigma_g = \liminf_{r\rightarrow\infty} \int_{\partial B_r} \left(\operatorname{Ric} + \frac{H^2}{n-1}g\right)(DV, \nu)d\sigma_g \le 0.
\end{equation}
Then the following inequality holds:
\begin{equation}\label{identity}
    \int_M V^{-1}\left|\operatorname{Hess}V +\frac{R+H^2}{n-1}Vg\right|^2 d\mu_g \le \frac{R+H^2}{n-1}\left(R+\frac{n}{n-1}H^2\right)\int_M V d\mu_g.
\end{equation}
This inequality extends to the case where the integrals of the non-negative functions on both sides diverge to $+\infty$.
\end{prop}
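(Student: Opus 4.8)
The plan is to run the computation of Proposition~\ref{integral eqn} on a compact exhaustion of $M$ and to control the extra flux through the ``spheres near infinity'' by the decay hypothesis \eqref{decay}. Since $\partial M$ is compact, we may take the exhaustion $\{B_r\}$ in \eqref{decay} to be a smooth compact exhaustion with $\partial M\subset B_r$ and $\partial B_r=\partial M\sqcup\Sigma_r$, where $\Sigma_r$ is a closed hypersurface lying in $\operatorname{int}M$ for all large $r$; one obtains such an exhaustion, for instance, from suitable sublevel sets of a proper nonnegative function that vanishes near $\partial M$, which automatically keeps $\Sigma_r$ away from $\partial M$ so that no corners occur. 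The ingredients of the proof of Proposition~\ref{integral eqn} are either purely pointwise or involve only $\partial M$ (still compact): the scalar curvature is constant and $H$ extends as a constant on $M$ (Theorem~\ref{basic}), the tensor $S=\operatorname{Ric}+\frac{H^2}{n-1}g$ is divergence-free, and the pointwise identity \eqref{eq: local eqn} holds throughout $M$. Integrating \eqref{eq: local eqn} over $B_r$ and applying the Divergence Theorem gives
\begin{equation*}
\int_{B_r} V^{-1}\left|\operatorname{Hess}V + \tfrac{R+H^2}{n-1}Vg\right|^2 d\mu_g = \int_{\partial M} S(DV,\nu)\,d\sigma_g + \int_{\Sigma_r} S(DV,\nu)\,d\sigma_g + c\int_{B_r} V\,d\mu_g,
\end{equation*}
where $c:=\frac{R+H^2}{n-1}\left(R+\frac{n}{n-1}H^2\right)$ and $\nu$ is the outward unit normal of $B_r$, which along $\partial M$ agrees with the outward unit normal of $M$.

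The next step is to note that the $\partial M$-term vanishes, which is precisely the computation closing the proof of Proposition~\ref{integral eqn}: it uses only $\operatorname{Ric}_{i\nu}=0$ on $\partial M$ (Theorem~\ref{basic}), the boundary condition $V_\nu=\frac{H}{n-1}V$, the Gauss-equation identity $0=\Delta_{\partial M}V+\left(R_{\nu\nu}+\frac{H^2}{n-1}\right)V$ on $\partial M$, and $\int_{\partial M}\Delta_{\partial M}V\,d\sigma_g=0$ because $\partial M$ is closed; none of this requires compactness of $M$. Hence $\int_{\partial M}S(DV,\nu)\,d\sigma_g=0$, and, since $\partial B_r=\partial M\sqcup\Sigma_r$, the identity above reduces to
\begin{equation*}
\int_{B_r} V^{-1}\left|\operatorname{Hess}V + \tfrac{R+H^2}{n-1}Vg\right|^2 d\mu_g - c\int_{B_r} V\,d\mu_g = \int_{\partial B_r} S(DV,\nu)\,d\sigma_g .
\end{equation*}

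Finally, I would let $r\to\infty$. Writing $I_r$ and $J_r$ for the two integrals over $B_r$ on the left, both are finite for each $r$ (as $B_r$ is compact and $V>0$ on $\overline{M}$) and non-decreasing in $r$, so by monotone convergence $I_r\uparrow I:=\int_M V^{-1}\left|\operatorname{Hess}V+\tfrac{R+H^2}{n-1}Vg\right|^2 d\mu_g\in[0,\infty]$ and $J_r\uparrow J:=\int_M V\,d\mu_g\in(0,\infty]$. Taking $\liminf_{r\to\infty}$ in the last identity, the right-hand side is $\le 0$ by \eqref{decay}; on the left, both $I_r$ and $-cJ_r$ converge in $[-\infty,+\infty]$, so $I_r-cJ_r\to I-cJ$ whenever this is not the indeterminate form $\infty-\infty$, and in the unique exceptional configuration $I=cJ=+\infty$ the claimed inequality \eqref{identity} reads $\infty\le\infty$ and holds. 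Thus $I-cJ\le 0$, i.e.\ \eqref{identity}; the statement remains correctly interpreted when $J=+\infty$, since then $cJ=+\infty$ once $c>0$, while $c<0$ is impossible — the left-hand side above would then have strictly positive $\liminf$, contradicting \eqref{decay}. I expect the only delicate point to be this last limiting step, and only for bookkeeping reasons: one has to treat uniformly the possibilities that $\int_M V$ or the Hessian integral is infinite. The geometric substance, namely the pointwise identity \eqref{eq: local eqn} and the vanishing of the flux across $\partial M$, is already supplied by Proposition~\ref{integral eqn}, so no new curvature computation is needed.
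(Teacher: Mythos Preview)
Your proposal is correct and follows essentially the same approach as the paper: integrate the pointwise identity \eqref{eq: local eqn} over $B_r$, use the vanishing of the $\partial M$-flux (exactly as in Proposition~\ref{integral eqn}), and pass to the limit via monotone convergence together with the decay hypothesis \eqref{decay}. Your treatment is in fact more careful than the paper's about the exhaustion setup and the possible $\infty-\infty$ indeterminacy in the limit, but the underlying argument is identical.
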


%The preceding discussion and Proposition~\ref{integral eqn} can be extended to complete non-compact simple static manifolds with boundary, provided the decay of $S(DV, \cdot)$ is sufficiently fast. Specifically, the integral identity still holds if there exist constants $C, \epsilon >0$ such that for all tangent vectors $X\in TM$ with $|X|$ sufficiently large, the following decay condition is met:
%\begin{equation}\label{decay}
%    \left|S(DV, X)\right| = \left|\left(\operatorname{Ric} + \frac{H^2}{n-1}g\right)(DV, X)\right| \le \frac{C}{|X|^{n-1+\epsilon}}.
%\end{equation}
%This condition ensures that the boundary term at infinity vanishes when applying the divergence theorem.
\begin{proof}
We begin with the local identity \eqref{eq: local eqn}. Integrating over $B_r \, (\text{for }r\gg 1)$ and applying the Divergence Theorem yields:
\begin{align*}
    \int_{B_r} V^{-1}\left|\operatorname{Hess}V +\frac{R+H^2}{n-1}Vg\right|^2 d\mu_g & = \int_{\partial M} S(DV, \nu)d\sigma_g + \int_{\partial B_r} S(DV, \nu)d\sigma_g\\
    & \quad + \frac{R+H^2}{n-1}\left(R+\frac{nH^2}{n-1}\right)\int_{B_r} Vd\mu_g\\
    & = \int_{\partial B_r} S(DV, \nu)d\sigma_g + \frac{R+H^2}{n-1}\left(R+\frac{nH^2}{n-1}\right)\int_{B_r} Vd\mu_g.
\end{align*}
Taking the limit inferior as $r\rightarrow\infty$ and using the decay condition \eqref{decay}, by the monotone convergence theorem, we find:
\begin{align*}
   & \int_M V^{-1}\left|\operatorname{Hess}V +\frac{R+H^2}{n-1}Vg\right|^2 d\mu_g\\
   = & \liminf_{r\rightarrow\infty}\int_{\partial B_r} S(DV, \nu)d\sigma_g + \frac{R+H^2}{n-1}\left(R+\frac{nH^2}{n-1}\right)\int_{M} Vd\mu_g\\
   \le & \frac{R+H^2}{n-1}\left(R+\frac{n}{n-1}H^2\right)\int_M V d\mu_g.
\end{align*}
\end{proof}

Observing that the left-hand side of \eqref{identity} is non-negative, the inequality implies that if the right-hand side is non-positive, both sides must vanish. This yields the following rigidity result.

\begin{cor}\label{cor: obata type}
Let $(M,g)$ be a simple static manifold with compact boundary admitting a positive static potential $V$. If $(M,g)$ is non-compact, we additionally assume the decay condition \eqref{decay}. Suppose that
    \begin{equation}\label{eq: curv.cond}
        (R+H^2)\left(R+\frac{n}{n-1}H^2\right) \le 0.
    \end{equation}
    Then $V$ satisfies the following Obata-type equation ($VS = 0$) subject to Robin boundary conditions:
    \begin{align}\label{Obata Robin}
        \left\{\begin{aligned} 
        \operatorname{Hess}V +\frac{R+H^2}{n-1}Vg &=0 \qquad & \text{in } & M,\\ 
        V_\nu - \frac{H}{n-1} V & = 0 & \text{on } &\partial M.
        \end{aligned}\right.
    \end{align}
    Consequently, $(M, g)$ is Einstein. 
\end{cor}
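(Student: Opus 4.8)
The plan is to read off the conclusion from the integral inequality \eqref{identity} established in the two preceding propositions, feeding in the sign information coming from the curvature hypothesis \eqref{eq: curv.cond} and the strict positivity of $V$. The whole argument is essentially a sign-chase, so there is very little computation to do beyond recalling identities already proved.

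First I would note that under \eqref{eq: curv.cond} the right-hand side of \eqref{identity} is non-positive. Indeed $\frac{R+H^2}{n-1}\left(R+\frac{n}{n-1}H^2\right)=\frac{1}{n-1}(R+H^2)\left(R+\frac{n}{n-1}H^2\right)\le 0$, while $\int_M V\,d\mu_g>0$ because $V$ is a positive smooth function on a manifold of positive measure (this integral may be $+\infty$ in the non-compact case, but it is certainly positive). On the other hand, the left-hand side of \eqref{identity} is the integral of a non-negative function, hence $\ge 0$. Chaining the inequalities forces $0\le \int_M V^{-1}\left|\operatorname{Hess}V+\frac{R+H^2}{n-1}Vg\right|^2 d\mu_g \le \frac{R+H^2}{n-1}\left(R+\frac{n}{n-1}H^2\right)\int_M V\,d\mu_g \le 0$, so both sides vanish; in particular no issue with infinite integrals arises, since $0\le(\text{left side})\le(\text{right side})\le 0$ still pins both quantities to zero. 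From the vanishing of the left integral together with $V>0$ and smoothness of the integrand, I conclude the pointwise identity $\operatorname{Hess}V+\frac{R+H^2}{n-1}Vg=0$ throughout $M$, which is precisely the interior equation of \eqref{Obata Robin}. The Robin boundary condition $V_\nu-\frac{H}{n-1}V=0$ on $\partial M$ in \eqref{Obata Robin} is just the second line of the traced system \eqref{static trace}, hence holds by admissibility of $V$. This proves that $V$ solves \eqref{Obata Robin}.

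For the Einstein conclusion I would recall the identity derived in the proof of Proposition~\ref{integral eqn}: using the interior static equation \eqref{static eqn}, one has $\operatorname{Hess}V+\frac{R+H^2}{n-1}Vg=V S$, where $S=\operatorname{Ric}+\frac{H^2}{n-1}g$. Since the left-hand side now vanishes identically and $V>0$, we obtain $S\equiv 0$, i.e.\ $\operatorname{Ric}=-\frac{H^2}{n-1}g$, so $(M,g)$ is Einstein (with Einstein constant $-\frac{H^2}{n-1}$; note this also forces $R=-\frac{n}{n-1}H^2$, consistently with the equality case of \eqref{eq: curv.cond}).

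I do not expect a genuine obstacle here: the only point requiring care is the bookkeeping in the non-compact setting, where both $\int_M V\,d\mu_g$ and the left-hand integral of \eqref{identity} could a priori be infinite, but the sandwich $0\le(\text{left side})\le(\text{right side})\le 0$ handles this uniformly, so no separate case analysis is needed.
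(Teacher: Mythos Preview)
Your argument is correct and matches the paper's approach exactly: the paper itself offers only the one-sentence observation preceding the corollary (that non-negativity of the left side of \eqref{identity} together with non-positivity of the right side forces both to vanish), and you have fleshed out precisely this sign-chase, including the identification $VS=0\Rightarrow S=0$ for the Einstein conclusion. Your handling of the boundary condition via \eqref{static trace} and the remark that $S=0$ forces $R=-\tfrac{n}{n-1}H^2$ are also in line with the paper's subsequent discussion.
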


Under these hypotheses, we necessarily have
\begin{equation}\label{eq: zero.cond}
        (R+H^2)\left(R+\frac{n}{n-1}H^2\right) = 0.
    \end{equation}
We examine the two possibilities arising from \eqref{eq: zero.cond} separately.

\begin{thm}\label{thm:Obata1}
Suppose that $R = -H^2$. Then $(M,g)$ is a Ricci-flat manifold with totally geodesic boundary, and $V$ is a positive constant.
\end{thm}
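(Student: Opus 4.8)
The plan is to substitute the hypothesis $R = -H^2$ into the conclusions of Corollary~\ref{cor: obata type} and read off rigidity. First I would note that when $R = -H^2$ the coefficient $\frac{R+H^2}{n-1}$ vanishes, so the Obata--Robin system \eqref{Obata Robin} becomes simply
\[
\operatorname{Hess}V = 0 \quad \text{in } M, \qquad V_\nu = \frac{H}{n-1}V \quad \text{on } \partial M .
\]
At the same time, the rigidity $VS = 0$ furnished by the corollary, combined with $V>0$, forces $S = \operatorname{Ric} + \frac{H^2}{n-1}g \equiv 0$; that is, $\operatorname{Ric} = -\frac{H^2}{n-1}g$.

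Next I would take the metric trace of this last identity, obtaining $R = -\frac{n}{n-1}H^2$. Comparing with the standing hypothesis $R = -H^2$ yields $\frac{n}{n-1}H^2 = H^2$, hence $H = 0$. Consequently $R = 0$ and $\operatorname{Ric}\equiv 0$, so $(M,g)$ is Ricci-flat; and since $\partial M$ is umbilic by Theorem~\ref{basic}, its second fundamental form is $h = \frac{H}{n-1}\hat{g} = 0$, so $\partial M$ is totally geodesic. The boundary condition now reads $V_\nu = 0$ on $\partial M$.

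It remains to prove that $V$ is a positive constant. The key observation is that $\operatorname{Hess}V = 0$ makes $DV$ a parallel vector field, so $|DV|$ is constant on $M$. Since $\partial M$ is compact (and is itself a closed manifold), $V|_{\partial M}$ attains a maximum at some $q \in \partial M$, where the intrinsic gradient $\nabla^{\partial M}(V|_{\partial M})$ vanishes; decomposing $DV|_q$ into its tangential and normal parts and using $V_\nu(q) = 0$, we get $DV|_q = 0$. Because $|DV|$ is constant this forces $DV \equiv 0$ on $M$, so $V$ is constant, and positivity makes it a positive constant.

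The proof is short, so there is no serious obstacle: the crux is recognizing that the degenerate value $R = -H^2$ both trivializes the Hessian equation and, through the trace identity, is incompatible with $H \neq 0$, which collapses the two boundary curvature conditions and forces everything to be flat. The one point requiring a little care is extracting the constancy of $V$ when $M$ is non-compact, and the argument above handles this uniformly by using only the compactness of $\partial M$; the decay hypothesis \eqref{decay} is needed only upstream, to invoke Corollary~\ref{cor: obata type}.
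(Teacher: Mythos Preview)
Your proof is correct and follows the same overall route as the paper: invoke Corollary~\ref{cor: obata type} to obtain $\operatorname{Hess}V=0$, deduce $\operatorname{Ric}=0$ and $H=0$ (the paper gets this by plugging $\operatorname{Hess}V=0$ into the static equation~\eqref{static int}, you by tracing the Einstein condition $S=0$; these are equivalent), and then argue that $V$ is constant. The only genuine difference is in this last step. The paper observes that the parallel field $DV$ has geodesic integral curves defined for all time, along which $V$ is affine, so positivity of $V$ forces $|DV|^2=0$. You instead exploit the compactness of $\partial M$: at a boundary maximum of $V|_{\partial M}$ the tangential gradient vanishes, and the boundary condition $V_\nu=0$ kills the normal component, so $DV$ vanishes at a point and hence, being parallel, everywhere. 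Your argument is a bit cleaner in that it sidesteps the question of whether geodesics on a manifold with boundary are complete (the paper implicitly uses that $DV$ is tangent to the totally geodesic boundary to justify this); on the other hand, the paper's argument would survive without compactness of $\partial M$, though here that is given.
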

\begin{proof}
    If $R = -H^2$, the interior equation in \eqref{Obata Robin} reduces to $\operatorname{Hess}V =0$. So by \eqref{static int}, we obtain $\operatorname{Ric} = 0$, which further implies $R = H = 0$. Thus, the system \eqref{Obata Robin} becomes:
\begin{align}\label{Obata Robin1}
\left\{\begin{aligned} 
\operatorname{Hess}V &=0 \qquad & \text{in } & M,\\ 
V_\nu & = 0 & \text{on } &\partial M.
\end{aligned}\right.
\end{align}
It follows that the vector field $DV$ is parallel. Moreover, the boundary condition shows that $DV$ is everywhere orthogonal to the normal vector $\nu$; consequently, $DV$ is tangent to the boundary $\partial M$.

Since $DV$ is parallel, its integral curves $\gamma(t)$ are geodesics defined for all $t\in\mathbb{R}$. Along these geodesics, the function $V$ is linear:
$$
V(\gamma(t))=ct+V(\gamma(0)),
$$
where $c=|DV|^2$ is a constant.

However, since $V>0$ everywhere, the slope $c$ must vanish. This implies $|DV|^2 = 0$, so $V$ must be a positive constant. Consequently, this case yields no additional geometric information about $M$.
\end{proof}

We now turn to the second case, where $R = -\frac{n}{n-1}H^2 < 0$. For convenience, define $\lambda \triangleq \tfrac{H}{n-1} \neq 0$. In this setting, the system \eqref{Obata Robin} reduces to:
\begin{align}\label{Obata Robin2}
\left\{\begin{aligned} 
\operatorname{Hess}V &= \lambda^2Vg  \qquad & \text{in } & M,\\ 
V_\nu &= \lambda V  & \text{on } &\partial M.
\end{aligned}\right.
\end{align}
Furthermore, $\operatorname{Ric} = - (n-1) \lambda^2g$. 

\begin{thm}\label{thm:Obata2}
Suppose that $R = -\frac{n}{n-1}H^2 < 0$. Then $M$ is non-compact and is isometric to the warped product
$$M \cong (-\infty,0] \times \Sigma$$
equipped with the metric
$$g = dt^2 + e^{2\lambda t} g_{\Sigma},$$
where:
\begin{itemize}
    \item the boundary $\partial M$ is identified with the slice $\{0\} \times \Sigma$;
    \item $(\Sigma, g_{\Sigma})$ is a compact, Ricci-flat manifold of dimension $n-1$;
    \item the static potential is given by $V(t, x) = V_0 e^{\lambda t}$ for some positive constant $V_0$.
\end{itemize}
Furthermore, if $(M,g)$ has constant sectional curvature, then $\Sigma$ must be flat. In this case, $M$ is isometric to a hyperbolic cusp.
%\begin{itemize}
%    \item a \textbf{horoball}, if $\Sigma \cong \mathbb{R}^{n-1}$;
%    \item a \textbf{hyperbolic cusp}, if $\Sigma \cong \mathbb{R}^{n-1}/\Gamma$.
%\end{itemize}
\end{thm}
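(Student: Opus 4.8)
The plan is to extract a first integral of the system \eqref{Obata Robin2}, use the compactness of $\partial M$ to pin down the value of the resulting constant, and then recognize \eqref{Obata Robin2} as a Tashiro-type concircular equation whose complete solutions on a manifold with boundary are forced to be warped products over a half-line. I would begin with the first integral: differentiating and using $\operatorname{Hess}V = \lambda^2 Vg$ gives $D|DV|^2 = 2\operatorname{Hess}V(DV,\cdot) = 2\lambda^2 V\,DV = D(\lambda^2 V^2)$, so $c := |DV|^2 - \lambda^2 V^2$ is constant on $M$. Splitting $DV$ into its parts tangent and normal to $\partial M$ and using the Robin condition $V_\nu = \lambda V$, one finds $|\nabla^{\partial M}V|^2 = |DV|^2 - V_\nu^2 = c$ on $\partial M$. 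Thus $c\ge 0$; but $\partial M$ is a compact manifold without boundary, so $V|_{\partial M}$ attains a maximum, where $\nabla^{\partial M}V = 0$, forcing $c = 0$. Consequently $|DV| = |\lambda|\,V > 0$ everywhere (recall $V>0$, $\lambda\ne 0$): $V$ has no critical point, $V$ is constant on $\partial M$, say $V\equiv V_0$, and $DV$ is everywhere normal to $\partial M$.

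Next I would produce the warped-product structure. Set $N = DV/|DV|$; a short computation from $\operatorname{Hess}V = \lambda^2 Vg$ and $|DV| = |\lambda|V$ gives $\nabla_N N = 0$, so the integral curves of $N$ are unit-speed geodesics, and on $\partial M$ they are (up to sign) the geodesics normal to $\partial M$. Along such a geodesic $\gamma$ the trace equation forces $(V\circ\gamma)'' = \lambda^2(V\circ\gamma)$, with $(V\circ\gamma)(0) = V_0$ and $(V\circ\gamma)'(0) = -\langle DV,\nu\rangle = -\lambda V_0$, hence $V\circ\gamma(\rho) = V_0 e^{-\lambda\rho}$. In particular $V\circ\gamma$ never again equals $V_0$, so $\gamma$ never returns to $\partial M$; since $\partial M$ is the whole boundary and $(M,g)$ is complete, $\gamma$ is defined for all $\rho\ge 0$. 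Writing $\rho = \dist(\cdot,\partial M)$, one then has $V = V_0 e^{-\lambda\rho}$, and restricting $\operatorname{Hess}V = \lambda^2 Vg$ to the level sets $\{\rho = \mathrm{const}\}$ shows they are totally umbilic with second fundamental form $-\lambda\,g_\rho$ relative to $\partial_\rho$, so $g_\rho = e^{-2\lambda\rho}g_\Sigma$ with $g_\Sigma$ the metric induced on $\partial M$. A standard completeness argument (cf. \cites{K,T}) then shows the normal exponential map $E:[0,\infty)\times\partial M\to M$ is a diffeomorphism onto $M$; putting $t = -\rho\in(-\infty,0]$ and $\Sigma = \partial M$ gives the asserted isometry $(M,g)\cong\big((-\infty,0]\times\Sigma,\,dt^2 + e^{2\lambda t}g_\Sigma\big)$ with $V = V_0 e^{\lambda t}$ and $\partial M = \{t=0\}$. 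The manifold is non-compact because it carries a complete ray along which the positive function $V$ tends to $0$ (if $\lambda>0$) or to $+\infty$ (if $\lambda<0$), which is impossible on a compact manifold.

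For the last two assertions I would argue as follows. Since $\operatorname{Ric} = -(n-1)\lambda^2 g$ and the warped-product Ricci tensor of $dt^2 + e^{2\lambda t}g_\Sigma$ restricted to the $\Sigma$-directions equals $\operatorname{Ric}_{g_\Sigma} - (n-1)\lambda^2 g$, we get $\operatorname{Ric}_{g_\Sigma} = 0$; and $\Sigma$ is compact and connected because $M$ is simple with compact boundary. If moreover $(M,g)$ has constant sectional curvature, it must equal $-\lambda^2$ by the Einstein relation; since the sectional curvature of a $2$-plane tangent to $\{t\}\times\Sigma$ for the warped metric is $e^{-2\lambda t}K_{g_\Sigma} - \lambda^2$, this forces $K_{g_\Sigma}\equiv 0$, i.e. $g_\Sigma$ is flat (automatic when $n=2$). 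Then $M = (-\infty,0]\times\Sigma$ with $\Sigma$ a compact flat $(n-1)$-manifold and $g = dt^2 + e^{2\lambda t}g_\Sigma$, which is precisely a truncated hyperbolic cusp.

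The hard part is the global step: upgrading the local warped-product description near $\partial M$ to a genuine isometry, i.e. showing the normal exponential map from $\partial M$ is a diffeomorphism onto all of $M$. This requires combining the completeness of $(M,g)$ with the facts that $V$ has no critical points and is strictly monotone along the normal geodesics (so that focal points do not occur, the map is injective, and it exhausts $M$). The remaining pieces --- the first integral, the computation $\nabla_N N = 0$, the umbilicity of the level sets, and the warped-product curvature identities --- are routine.
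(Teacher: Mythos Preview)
Your proof is correct and follows essentially the same route as the paper's: both establish the first integral $|DV|^2-\lambda^2V^2\equiv c$, pin down $c=0$ using the compact boundary, and then read off the warped-product splitting from the concircular equation. Your deduction of $c=0$ via a maximum of $V|_{\partial M}$ is slightly cleaner than the paper's integral-curve argument, and your use of $N=DV/|DV|$ is equivalent to the paper's $f=\log V$; otherwise the arguments coincide.
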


\begin{proof}
Define the function $Q \triangleq |D V|^2 - \lambda^2 V^2$.
Recall that for any vector field $X\in TM$,
\begin{equation*}
    D_X \lvert D V\rvert^2 = 2g( D_X DV, DV) = 2\hess V(D V, X).
\end{equation*}
Using this identity together with the interior equation from \eqref{Obata Robin2}, we compute the derivative of $Q$:
\begin{align*}
    D_X Q & = 2 \hess V(D V, X) - 2 \lambda^2 V D_X V\\
    & = 2 \lambda^2 V g(D V, X) - 2 \lambda^2 V g(D V, X) = 0.
\end{align*}
This implies that $Q\equiv C$ is constant on $M$. Evaluating $Q$ on the boundary $\partial M$ and using the boundary condition $V_\nu = \lambda V$, we have:
\begin{equation}
    C = \lvert D V\rvert^2 - \lambda^2 V^2= \lvert D V\rvert^2 - V_\nu^2 = \lvert \D V\rvert^2 \geq 0.
\end{equation}

We claim that $C = 0$. Suppose for the sake of contradiction that $C>0$. From the equation above, $\lvert \D V\rvert^2 = C > 0$, so $\D V$ is nowhere vanishing on $\partial M$. Since $\partial M$ is complete, for any point $p\in\partial M$, there exists an integral curve $\gamma(s): \R \rightarrow \partial M$ of the vector field $-\D V$ starting at $p = \gamma(0)$. Along this curve, we have: 
\begin{align*}
    V(\gamma(s)) = V(\gamma(0)) + \int_0^s \frac{d}{d\tau} V(\gamma(\tau))\, d\tau
    = V(p) - \int_0^s \lvert \D V(\gamma(\tau))\rvert^2 d\tau
    = V(p) - s C.
\end{align*}
Since $C>0$, for sufficiently large $s$, $V(\gamma(s))$ becomes negative. This contradicts the assumption that $V>0$. Thus, we must have $C=0$.

Consequently, $\D V = 0$, which implies that $V \equiv V_0> 0$ is constant on the boundary $\partial M$. Furthermore, in the interior $M$, the condition $Q=0$ implies:
\begin{equation}\label{eq: DV}
    \lvert D V\rvert = \lvert \lambda \rvert V.
\end{equation}
Define the function $f \triangleq \log V$. Then \eqref{eq: DV} implies $\lvert D f\rvert = \lvert \lambda\rvert$. Since $f$ has constant gradient norm, its integral curves are geodesics. Specifically, since $V_\nu = \lambda V > 0$ (assuming $\lambda>0$ without loss of generality) on the boundary, $Df$ points outward. Let $t$ be the signed distance function from $\pa M$, oriented such that $Dt$ coincides with the outward normal $\nu$ at $\partial M$. Since $\lvert D f\rvert$ is constant and $Df$ is proportional to $\nu$ at the boundary, the level sets of $f$ coincide with the level sets of $t$. Thus, $M$ splits topologically as $(-\infty, 0] \times \Sigma$ (where $\Sigma \cong \partial M$), and the metric splits as
$$g = dt^2 + g_t,$$
where $g_t$ is the induced metric on the slice $\{t\}\times \Sigma$. The relation $\lvert D f\rvert = \lvert \lambda\rvert$ implies $\partial_t f = \lambda$, so $f(t) = \log V_0 + \lambda t$. Consequently, 
$$V(t) = V_0 e^{\lambda t}.$$

We now determine the metric $g_t$. Using the Hessian condition in \eqref{Obata Robin2} and the fact that $Dt = \partial_t$, the evolution equation for the metric is:
\begin{align*}
    \frac{\partial}{\partial t} g_t(X,Y) & = \mathcal{L}_{D_t}g (X,Y) = 2\hess t(X,Y)\\
    & = 2 g\left(D_X D t, Y\right)\\
    & = 2g\left(D_X \left(\frac{D V}{\lambda V}\right), Y\right)\\
    & = \frac{2}{\lambda V}g\left(D_X DV, Y\right)\\
    & = \frac{2}{\lambda V}\hess V(X,Y)\\
    & =2\lambda g(X,Y) = 2\lambda g_t(X,Y),
\end{align*}
where $X,Y$ are tangent to $\Sigma$. Integrating this ODE yields $g_t = e^{2\lambda t} g_\Sigma$. Thus, the full metric is $g = dt^2 + e^{2\lambda t}g_\Sigma$.

We check the compactness of $M$. The coordinate $t$ ranges in $(-\infty,0]$. Since $V(t) = V_0 e^{\lambda t}$ is strictly positive and non-critical for all $t$, the flow lines cannot terminate at an interior critical point, nor can there be another boundary component (where the boundary condition would fail). Thus, $M$ is non-compact.

To characterize $\Sigma$, we compute the Ricci curvature. By \cite{O}*{Corollary 7.43}, for the warped product metric with warping function $\rho = e^{\lambda t}$, the Ricci curvature restricted to horizontal vectors $X,Y\in T\Sigma$ is:
\begin{align*}
    \operatorname{Ric}_M(X,Y) & = \operatorname{Ric}_{\Sigma}(X,Y) - \left(\frac{\Delta_\R \rho}{\rho}  + (n-2) \frac{\lvert D \rho\rvert^2}{\rho^2}\right) g(X,Y)\\
    & = \operatorname{Ric}_{\Sigma}(X,Y)- (n-1)\lambda^2 g(X,Y),
\end{align*}
where $\Delta_\R = \pa_t^2$ is the Laplacian on the base $\R$. Comparing this with the Einstein condition $\operatorname{Ric}_M = -(n-1)\lambda^2 g$, we conclude that $\operatorname{Ric}_{\Sigma} = 0$.
Thus, $(\Sigma, g_{\Sigma})$ is  Ricci-flat.

Finally, assume that $(M, g)$ has constant sectional curvature $K$. Given the Ricci curvature derived above, we must have $K = -\lambda^2$. Consequently, the Riemann curvature tensor is given by
$$\operatorname{Rm}_M(X,Y)Z = -\lambda^2\left(g(Z,X) Y - g(Z, Y) X\right).$$
On the other hand, by \cite{O}*{Proposition 7.42}, the curvature tensor of the warped product restricted to vectors $X,Y,Z$ tangent to $\Sigma$ satisfies: 
\begin{align*}
    \operatorname{Rm}_M(X,Y)Z &= \operatorname{Rm}_\Sigma(X,Y)Z - \frac{\lvert D\rho\rvert^2}{\rho^2} (g(Z,X) Y - g(Z, Y) X) \\
    &= \operatorname{Rm}_\Sigma(X,Y)Z  -\lambda^2(g(Z,X) Y - g(Z, Y) X).
\end{align*}
Comparing these two expressions yields $\operatorname{Rm}_\Sigma \equiv 0$, implying that $\Sigma$ is flat. Since $\Sigma$ is compact, it follows that $\Sigma \cong \mathbb{R}^{n-1}/\Gamma$, and consequently, $M$ is isometric to a hyperbolic cusp.
\end{proof}

We combine the above results to prove Theorem~\ref{surj}.

\begin{proof}[Proof of Theorem~\ref{surj}]
We focus on the case where Condition 3 holds. Proceeding by contradiction, suppose that the map $g\mapsto (R_g, H_g)$ is not a local surjection. It follows that $(M,g)$ must be a simple static manifold with compact boundary. By Condition 3, the curvatures satisfy the inequality
\begin{equation*}
    \left(R+H^2\right)\left(R+\frac{n}{n-1}H^2\right)\le 0.
\end{equation*}
Consequently, Corollary~\ref{cor: obata type} implies that $(M,g)$ must be of Obata type. However, this contradicts the hypothesis in Condition 3 that $(M,g)$ is not of Obata type. Therefore, the map must be a local surjection.
\end{proof}

%On the other hand, if it's not Obata type, then $$\left(R+H^2\right)\left(R+\frac{n}{n-1}H^2\right)>0.$$ So we have either $R+H^2>0$ or $R+\frac{n}{n-1}H^2<0$. Thus, for static manifolds with boundary, we get another necessary condition if it's not Obata type: $$R+H^2>0\quad\text{ or }\quad R+\frac{n}{n-1}H^2<0.$$ In particular, at least one of $R, H$ is not $0$. In the second case, we have $$R<-\frac{n}{n-1}H^2\le 0.$$ Combined with \cite{Sheng2}*{Proposition 3.3}, we further get $H>0$.

\section{Examples}\label{sec3}
In this section, we present examples of simple manifolds with compact boundary (subject to the decay condition \eqref{decay} if non-compact) equipped with a static metric $g$ and a positive static potential. These examples are not of Obata type and have scalar curvature satisfying 
\begin{equation*}%\label{curv ineq} 
-\frac{n}{n-1}H^2\le R\le -H^2. \end{equation*} 
Consequently, by Theorem~\ref{surj}, the map $g\mapsto (R_g, H_g)$ is a local surjection.

\begin{example}[Schwarzschild metric]
Let $m>0$. Consider the Schwarzschild metric $g = \phi^{\frac{4}{n-2}} \delta$ on $\mathbb{R}^{n} \setminus \{\mathbf{0}\}$, where $\delta$ is the Euclidean metric, $r = |\vec{x}|$, and the conformal factor is defined by $\phi = 1 + \frac{m}{2r^{n-2}}$. Let $M$ be the region outside the horizon ($r^{n-2} > \frac{m}{2}$), so that the boundary $\partial M$ corresponds to the horizon itself. Then $(M, g)$ is a simple manifold with compact boundary.

\textbf{Geometric Quantities.}
As shown in \cite{Sheng2}, $g$ is static with a potential $V$ given by:
$$V = \frac{1 - \frac{m}{2r^{n-2}}}{1 + \frac{m}{2r^{n-2}}} = \frac{2}{\phi} - 1.$$
Note that $V > 0$ in the interior of $M$ and vanishes on the horizon $\partial M$.

Recall that $(M, g)$ is scalar-flat ($R = 0$), and since the horizon $\partial M$ is minimal, the mean curvature $H = 0$. Consequently, $R = -H^2 = 0$, and only Type I rigidity is possible.

\textbf{Decay Condition.}
We verify that $\operatorname{Ric}(DV, \nu)$ satisfies the decay condition at infinity. Let $\vec{e} = \frac{\vec{x}}{r}$ be the Euclidean unit radial vector. The $g$-unit outward normal to the coordinate sphere $\Sigma_r$ is:
    $$\nu = \phi^{-\frac{2}{n-2}} \vec{e}.$$
The Euclidean gradient of the potential, $DV$, is calculated as:
    $$DV = \frac{m(n-2)}{r^{n-1} \phi^2} \vec{e} = \frac{m(n-2)}{r^{n-1}} \phi^{\frac{2}{n-2}-2} \nu.$$
Using the standard formula for the Ricci tensor of a conformally flat metric, we observe the leading-order decay:
    $$\operatorname{Ric}_{ij} \sim \frac{m(n-2)}{r^n}\left(\delta_{ij} - n\frac{x_ix_j}{r^2}\right).$$
As a result,    
\begin{align*}
        \operatorname{Ric}(DV, \nu) = \frac{m(n-2)}{r^{n-1}} \phi^{\frac{2}{n-2}-2} \operatorname{Ric}(\nu, \nu) \sim \frac{-m^2(n-1)(n-2)^2}{r^{2n-1}}.
    \end{align*}
 Integrating over the sphere $\Sigma_r$ (where $\operatorname{Area}_g(\Sigma_r) \sim \omega_{n-1} r^{n-1}$), the limit becomes:
\begin{align*}
\liminf_{r\to\infty} \int_{\Sigma_r} S(DV, \nu)  d\sigma_g = \lim_{r\to\infty} \int_{\Sigma_r} \operatorname{Ric}(DV, \nu) d\sigma_g = \lim_{r\to\infty} \left( \frac{C}{r^{2n-1}} \cdot r^{n-1} \right) = \lim_{r\to\infty} \frac{C}{r^n} = 0.
\end{align*}

Finally, since $(M, g)$ is not Ricci-flat, it cannot be of Type I and hence is not of Obata type.
\end{example}

\begin{example}[Generalized Kottler metric]
Let $m\ge0$. Let $(\Sigma, b)$ be a closed $(n-1)$-dimensional Riemannian manifold with constant Ricci curvature $\operatorname{Ric}_b = (n-2)b$.
Define the metric $g$ on $M = [r_c, \infty) \times \Sigma$ by
\begin{equation*}
    g = \frac{1}{r^2 + 1 - 2m r^{2-n}} dr^2 + r^2 b,
\end{equation*} 
where $r_c = (2m)^{\frac{1}{n-2}}$. We first verify that $(M, g)$ is well-defined by showing $r_c> r_0$, where $r_0$ is the largest root of $u(r) = r^2 + 1 - 2m r^{2-n}$.

\textbf{Case I} ($m=0$): $u(r) = r^2+1>0$ for all $r$, so the metric is well-defined for all $r$. 

\textbf{Case II} ($m>0$): Since $u'(r) = 2r(1+m(n-2)r^{-n})>0$ for $r>0$, $u$ is strictly increasing. Since
$$
u(r_c) = (2m)^{\frac{2}{n-2}} + 1 - 2m\cdot(2m)^{-1} = (2m)^{\frac{2}{n-2}}>0 = u(r_0),
$$
monotonicity implies $r_c > r_0$.

Thus, $(M, g)$ is a smooth simple manifold with a compact boundary at $r = r_c$.

\bigskip
\textbf{Geometric Quantities.} As shown in \cite{H-J}, $g$ is static with a static potential $V = \sqrt{r^2 + 1 - 2m r^{2-n}} >0$. Then we may rewrite the metric as 
$$g = \frac{1}{V^2} dr^2 + r^2 b.$$
The Ricci curvature components are given by
\begin{align*}
    \Ric_{rr} &= -\frac{n-1}{r}\cdot\frac{\left(V^2\right)'}{2}g_{rr} = -(n-1) \left(1 + m(n-2)r^{-n}\right)g_{rr},\\
    \Ric_{ij} &= \left(\frac{(n-2)-(n-2)V^2}{r^2} - \frac{\left(V^2\right)'}{2r}\right)g_{ij} = -\left((n-1) - m(n-2)r^{-n}\right)g_{ij}.
\end{align*}
It follows that the scalar curvature is constant: $R = -n(n-1).$

Let $\nu = V \partial_r$ be the outward unit normal to the $r$-level sets $\Sigma_r$. The mean curvature $H$ of $\Sigma$ is:
$$
H = \operatorname{div}(-\nu) = -\frac{n-1}{r_c}V(r_c) = -(n-1).
$$
So $R = -\frac{n}{n-1}H^2 < 0$, and only Type II rigidity is possible.

\textbf{Decay Condition.} Note that $DV = \left(VV'\right) \nu$, so
\begin{align*}
    S(DV, \nu) & = \left(VV'\right)S(\nu,\nu) = \left(VV'\right)\left( \Ric(\nu, \nu) + (n-1) \right)\\
    & = \left(VV'\right)\left( V^2\Ric_{rr} + (n-1) \right)\\
    & = - m(n-1)(n-2)r^{1-n}\cdot(1+m(n-2)r^{-n}).
\end{align*}
As $r\rightarrow\infty$, $S(DV, \nu)\sim - m(n-1)(n-2)r^{1-n}$.
Integrating over the level set $\Sigma_r$ with the volume element $d\sigma_g = r^{n-1} d\sigma_b$ gives
\begin{align*}
    \liminf_{r\to\infty}\int_{\Sigma_r} S(DV, \nu) d\sigma_g & = \lim_{r\to\infty}\int_{\Sigma}\left(- m(n-1)(n-2)r^{1-n}\right)r^{n-1} d\sigma_b\\
    & = - m(n-1)(n-2)\operatorname{Vol}(\Sigma, b) \le 0.
\end{align*}

Finally, if $(\Sigma, b)$ is not Ricci-flat, then $(M, g)$ cannot be of Type II, and hence is not of Obata type.
\end{example}

\begin{rmk}
As noted in \cite{H-J}, in the case $H=-(n-1)$, the integral above coincides with the \emph{Wang–Chruściel–Herzlich mass integral} (see also \cites{C-H, Herzlich, B-C-H}):
$$
-\frac{n-2}2 m(g,V) = \lim_{r\to\infty}\int_{\Sigma_r}(\operatorname{Ric} + (n-1)g)(DV, \nu) d\sigma_g.
$$
Furthermore, this limit is well-defined and converges on any ALH manifold.
\end{rmk}

\bibliographystyle{amsplain}
\begin{bibdiv}
\begin{biblist}
\bib{B-C-H}{article}{
   author={Barzegar, Hamed},
   author={Chru\'sciel, Piotr T.},
   author={H\"orzinger, Michael},
   title={Energy in higher-dimensional spacetimes},
   journal={Phys. Rev. D},
   volume={96},
   date={2017},
   number={12},
   pages={124002, 25},
   issn={2470-0010},
   review={\MR{3870907}},
   doi={10.1103/physrevd.96.124002},
}

\bib{C-H}{article}{
   author={Chru\'sciel, Piotr T.},
   author={Herzlich, Marc},
   title={The mass of asymptotically hyperbolic Riemannian manifolds},
   journal={Pacific J. Math.},
   volume={212},
   date={2003},
   number={2},
   pages={231--264},
   issn={0030-8730},
   review={\MR{2038048}},
   doi={10.2140/pjm.2003.212.231},
}

\bib{C}{article}{
   author={Corvino, Justin},
   title={Scalar curvature deformation and a gluing construction for the
   Einstein constraint equations},
   journal={Comm. Math. Phys.},
   volume={214},
   date={2000},
   number={1},
   pages={137--189},
   issn={0010-3616},
   review={\MR{1794269}},
   doi={10.1007/PL00005533},
}

\bib{F-M}{article}{
   author={Fischer, Arthur E.},
   author={Marsden, Jerrold E.},
   title={Deformations of the scalar curvature},
   journal={Duke Math. J.},
   volume={42},
   date={1975},
   number={3},
   pages={519--547},
   issn={0012-7094},
   review={\MR{380907}},
}

\bib{Herzlich}{article}{
   author={Herzlich, Marc},
   title={Computing asymptotic invariants with the Ricci tensor on
   asymptotically flat and asymptotically hyperbolic manifolds},
   journal={Ann. Henri Poincar\'e},
   volume={17},
   date={2016},
   number={12},
   pages={3605--3617},
   issn={1424-0637},
   review={\MR{3568027}},
   doi={10.1007/s00023-016-0494-5},
}

\bib{H-J}{article}{
   author={Huang, Lan-Hsuan},
   author={Jang, Hyun Chul},
   title={Scalar curvature deformation and mass rigidity for ALH manifolds
   with boundary},
   journal={Trans. Amer. Math. Soc.},
   volume={375},
   date={2022},
   number={11},
   pages={8151--8191},
   issn={0002-9947},
   review={\MR{4491448}},
   doi={10.1090/tran/8755},
}

\bib{K}{article}{
   author={Kanai, Masahiko},
   title={On a differential equation characterizing a Riemannian structure
   of a manifold},
   journal={Tokyo J. Math.},
   volume={6},
   date={1983},
   number={1},
   pages={143--151},
   issn={0387-3870},
   review={\MR{0707845}},
   doi={10.3836/tjm/1270214332},
}

\bib{O}{book}{
   author={O'Neill, Barrett},
   title={Semi-Riemannian geometry},
   series={Pure and Applied Mathematics},
   volume={103},
   note={With applications to relativity},
   publisher={Academic Press, Inc. [Harcourt Brace Jovanovich, Publishers],
   New York},
   date={1983},
   pages={xiii+468},
   isbn={0-12-526740-1},
   review={\MR{0719023}},
}

\bib{Sheng}{article}{
   author={Sheng, Hongyi},
   title={Localized deformation of the scalar curvature and the mean curvature},
   journal={arXiv: 2402.08619},
   date={2024},
}

\bib{Sheng2}{article}{
   author={Sheng, Hongyi},
   title={Static manifolds with boundary and rigidity of scalar curvature
   and mean curvature},
   journal={Int. Math. Res. Not. IMRN},
   date={2025},
   number={7},
   pages={Paper No. rnaf086, 18},
   issn={1073-7928},
   review={\MR{4888700}},
   doi={10.1093/imrn/rnaf086},
}

\bib{T}{article}{
   author={Tashiro, Yoshihiro},
   title={Complete Riemannian manifolds and some vector fields},
   journal={Trans. Amer. Math. Soc.},
   volume={117},
   date={1965},
   pages={251--275},
   issn={0002-9947},
   review={\MR{0174022}},
   doi={10.2307/1994206},
}

\end{biblist}
\end{bibdiv}

\end{document}